\def\R{\mathbb{R}}
\newcommand{\C} {\mathbb{C}}
\newtheorem{nnassumption}{\bf Assumption}
\newtheorem{nntheorem}{\bf Theorem}
\newenvironment{theorem}{\begin{nntheorem}\it}{\end{nntheorem}}
\newtheorem{nncorollary}{\bf Corollary}
\newtheorem{nndefinition}{\bf Definition}
\newtheorem{nnproposition}{\bf Proposition}
\newtheorem{nnproblem}{\bf Problem}
\newtheorem{nnlemma}{\bf Lemma}
\newtheorem{nnremark}{\bf Remark}
\newenvironment{remark}{\begin{nnremark} \rm }{\hfill \hspace*{1pt}\hfill $\circ$\end{nnremark}}
\newtheorem{nnexample}{\bf Example}
\newenvironment{proof}{{\bf Proof.}}{\hfill \hspace*{1pt}\hfill $\Box$}
\begin{document}
%
\title{Boundary Output Feedback Stabilization of State Delayed Reaction-Diffusion PDEs}
%
%
%

\author{Hugo~Lhachemi, Robert Shorten
\thanks{Hugo Lhachemi is with Universit{\'e} Paris-Saclay, CNRS, CentraleSup{\'e}lec, Laboratoire des signaux et syst{\`e}mes, 91190, Gif-sur-Yvette, France (e-mail: hugo.lhachemi@centralesupelec.fr). Robert Shorten is with Dyson School of Design Engineering, Imperial College London, London, U.K (e-mail: r.shorten@imperial.ac.uk).
}
}

%
%

\markboth{}%
{Lhachemi \MakeLowercase{\textit{et al.}}}
%



\maketitle

\begin{abstract}
This paper studies the boundary output feedback stabilization of general 1-D reaction-diffusion PDEs in the presence of a state delay in the reaction term. The control input applies through a Robin boundary condition while the system output is selected as a either Dirichlet or Neumann boundary trace. The control strategy takes the form of a finite-dimensional observer-based controller with feedback and observer gains that are computed in order to dominate the state delayed term. For any arbitrarily given value of the state delay, we show the exponential stability of the resulting closed-loop system provided the order of the observer is selected large enough. 
\end{abstract}

\begin{IEEEkeywords}
State delayed reaction-diffusion PDEs, output feedback, boundary control.
\end{IEEEkeywords}

%
\IEEEpeerreviewmaketitle

\section{Introduction}\label{sec: Introduction}

Reaction-diffusion partial differential equations (PDEs) commonly arise in the modeling of a great variety of phenomena in many fields of research such as biology, chemistry, computer science, population dynamics, and physics. In these models time delays may arise due local phenomena such as inertia~\cite{polyanin2014nonlinear} or natural feedback processes~\cite{gaffney2006gene,allegretto2007stability,su2009hopf}. When considering active control strategies, time-delays may also arise due to the active implementation of feedback loops. In this context, the development of control strategies for the feedback stabilization of reaction-diffusion PDEs with a delay, either in the control input~\cite{katz2021delayed,katz2021sub,krstic2009control,lhachemi2019lmi,lhachemi2020feedback,lhachemi2021robustness,lhachemi2021predictor,qi2020compensation} or in the state~\cite{hashimoto2016stabilization,kang2017boundary,lhachemi2020boundary}, has attracted much attention in the recent years.    

We specifically consider in this paper the topic of boundary feedback stabilization of 1-D reaction-diffusion PDEs in the presence of a state delay. One of the very first contributions regarding the boundary stabilization of such systems was reported in~\cite{hashimoto2016stabilization} for a Dirichlet boundary control using a backstepping transformation~\cite{krstic2008boundary}. Using a backstepping design, the extension to time-varying delays was reported in~\cite{kang2017boundary} for a PDE-ODE cascade. Finally, the case of general Robin boundary control/condition was studied in~\cite{lhachemi2020boundary}. In this latter reference, the control strategy takes the form of a finite-dimensional state feedback obtained by leveraging spectral-reduction methods~\cite{russell1978controllability,coron2004global,coron2006global}. It is worth noting that all the above contributions address the case of a state feedback. An output feedback control strategy was presented in~\cite{wu2020static} for the sole case of bounded control and observation operators. However, the bilinear matrix inequalities-based stability conditions reported therein are only shown to be sufficient. To the best of our knowledge, no systematic control control design strategy was reported for the boundary output feedback stabilization of state delayed reaction-diffusion PDEs with boundary measurements.

The objective of this paper is to achieve the boundary output feedback stabilization of general 1-D reaction-diffusion PDEs in the presence of a state delay and using boundary measurements. This work directly extends the state-feedback setting studied in~\cite{lhachemi2020boundary} to output feedback configurations. The boundary control/conditions take the form of Dirichlet/Neumann/Robin conditions while the system output is obtained as a either Dirichlet or Neumann trace. The control strategy consists of a finite-dimensional observer-based controller~\cite{curtain1982finite,balas1988finite,harkort2011finite}. We leverage the approach reported first in~\cite{katz2020constructive}, and more specifically on the scaling-based procedures described in~\cite{lhachemi2020finite,lhachemi2021nonlinear} that allow to handle Dirichlet/Neumann boundary measurements while performing, for very general 1-D reaction-diffusion PDEs, the control design directly with the actual control input $u$ and not its time-derivative $v = \dot{u}$ (see, e.g., \cite[Sec.~3.3.]{curtain2012introduction} for generalities on boundary control systems). Using Lyapunov's direct method, we propose a finite-dimensional observer-based control strategy that achieves the exponential stabilization of reaction-diffusion PDEs in the presence of a state delay. In this approach, the feedback and observer gains are computed in order to dominate the state delayed term. Surprisingly, the obtained stability conditions, which ensure the exponential decay of the PDE trajectories in $H^1$ norm, are independent of the value of the state delay $h > 0$. Moreover, we show that these constraints are always feasible when selecting the number of modes of the observer to be large enough. Consequently, a distinguishing feature of the proposed control strategy is that the order of the controller is selected independently of the value of the state delay $h$.

The paper is organized as follows. After introducing relevant notations and properties, the control design problem is described in Section~\ref{sec: preliminaries}. The case of a Dirichlet boundary measurement, including control design and main stability result, is addressed in Section~\ref{sec: Dirichlet measurement}. Then the case of a Neumann boundary measurement is reported in Section~\ref{sec: Neumann measurement}. Numerical computations illustrating the main results of this paper are placed in Section~\ref{sec: numerics}. Finally, concluding remarks are formulated in Section~\ref{sec: conclusion}.

\section{Definitions and problem setting}\label{sec: preliminaries}

\subsection{Definitions and properties}

\subsubsection{Notation}

Real spaces $\R^n$ are endowed with the classical Euclidean norm $\Vert\cdot\Vert$. The corresponding induced norms of matrices are also denoted by $\Vert\cdot\Vert$. For any two vectors $X$ and $Y$ of arbitrary dimensions, we denote by $ \mathrm{col} (X,Y)$ the vector $[X^\top,Y^\top]^\top$. The space of square integrable functions on $(0,1)$ is denoted $L^2(0,1)$. This space is endowed with the usual inner product $\langle f , g \rangle = \int_0^1 f(x) g(x) \,\mathrm{d}x$ and the associated norm is denoted by $\Vert \cdot \Vert_{L^2}$. For an integer $m \geq 1$, $H^m(0,1)$ denotes the $m$-order Sobolev space and is equipped with its usual norm $\Vert \cdot \Vert_{H^m}$. For any given symmetric matrix $P \in\R^{n \times n}$, $P \succeq 0$ (resp. $P \succ 0$) means that $P$ is positive semi-definite (resp. positive definite).

\subsubsection{Properties of Sturm-Liouville operators}

Let $\theta_1,\theta_2\in[0,\pi/2]$, $p \in \mathcal{C}^1([0,1])$ and $q \in \mathcal{C}^0([0,1])$ with $p > 0$ and $q \geq 0$. Let the Sturm-Liouville operator $\mathcal{A} : D(\mathcal{A}) \subset L^2(0,1) \rightarrow L^2(0,1)$ be defined by $\mathcal{A}f = - (pf')' + q f$ on the domain $D(\mathcal{A}) = \{ f \in H^2(0,1) \,:\, c_{\theta_1} f(0) - s_{\theta_1} f'(0) = c_{\theta_2} f(1) + s_{\theta_2} f'(1) = 0 \}$ where $c_{\theta_i} = \cos\theta_i$ and $s_{\theta_i} = \sin\theta_i$. The eigenvalues $\lambda_n$, $n \geq 1$, of $\mathcal{A}$ are simple, non negative, and form an increasing sequence with $\lambda_n \rightarrow + \infty$ as $n \rightarrow + \infty$. The corresponding unit eigenvectors $\phi_n \in L^2(0,1)$ form a Hilbert basis. The domain of the operator $\mathcal{A}$ is equivalently characterized by $D(\mathcal{A}) = \{ f \in L^2(0,1) \,:\, \sum_{n\geq 1} \vert \lambda_n \vert ^2 \vert \left< f , \phi_n \right> \vert^2 < +\infty \}$. Let $p_*,p^*,q^* \in \R$ be such that $0 < p_* \leq p(x) \leq p^*$ and $0 \leq q(x) \leq q^*$ for all $x \in [0,1]$, then it holds 
$
0 \leq \pi^2 (n-1)^2 p_* \leq \lambda_n \leq \pi^2 n^2 p^* + q^*
$
for all $n \geq 1$~\cite{orlov2017general}. Moreover if $p \in \mathcal{C}^2([0,1])$, we have (see, e.g., \cite{orlov2017general}) that $\phi_n (\xi) = O(1)$ and $\phi_n' (\xi) = O(\sqrt{\lambda_n})$ as $n \rightarrow + \infty$ for any given $\xi \in [0,1]$. Under the additional assumption $q > 0$, an integration by parts and the continuous embedding $H^1(0,1) \subset L^\infty(0,1)$ show the existence of constants $C_1,C_2 > 0$ such that
\begin{align}
C_1 \Vert f \Vert_{H^1}^2 \leq 
\sum_{n \geq 1} \lambda_n \left< f , \phi_n \right>^2
= \left< \mathcal{A}f , f \right>
\leq C_2 \Vert f \Vert_{H^1}^2 \label{eq: inner product Af and f}
\end{align}
for any $f \in D(\mathcal{A})$. The latter inequalities and the Riesz-spectral nature of $\mathcal{A}$ imply that the series expansion $f = \sum_{n \geq 1} \left< f , \phi_n \right> \phi_n$ holds in $H^2(0,1)$ norm for any $f \in D(\mathcal{A})$. Due to the continuous embedding $H^1(0,1) \subset L^{\infty}(0,1)$, we obtain that $f(0) = \sum_{n \geq 1} \left< f , \phi_n \right> \phi_n(0)$ and $f'(0) = \sum_{n \geq 1} \left< f , \phi_n \right> \phi_n'(0)$. We finally define, for any integer $N \geq 1$, $\mathcal{R}_N f = \sum_{n \geq N+1} \left< f , \phi_n \right> \phi_n$.

\subsection{Problem setting and spectral reduction}

\subsubsection{Problem setting}

We consider the state delayed reaction-diffusion system described by
\begin{subequations}\label{eq: PDE}
\begin{align}
& z_t(t,x) = (p(x) z_x(t,x))_x - \tilde{q}(x) z(t,x) + c z(t-h,x) \\
& c_{\theta_1} z(t,0) - s_{\theta_1} z_x(t,0) = 0 \\
& c_{\theta_2} z(t,1) + s_{\theta_2} z_x(t,1) = u(t) \\
& z(\tau,x) = z_0(\tau,x) , \quad \tau \in [-h,0]
\end{align}
\end{subequations}
for $t > 0$ and $x \in (0,1)$ where $\theta_1 , \theta_2 \in [0,\pi/2]$, $p \in\mathcal{C}^2([0,1])$ with $p > 0$, $\tilde{q} \in\mathcal{C}^0([0,1])$, $c \in\R$ with $c \neq 0$, and the state delay $h > 0$. Here $z(t,\cdot)$ represents the state of the reaction-diffusion equation at time $t$, $u(t)$ stands for the command, and $z_0(\tau,\cdot)$ is the initial condition which is provided for the non positive times $\tau \in [-h,0]$. The system output is selected as the either Dirichlet trace $y_D(t)$, in which case $\theta_1 \in (0,\pi/2]$, or Neumann trace $y_N(t)$, in which case $\theta_1 \in [0,\pi/2)$, defined by
\begin{equation}\label{eq: system output}
y_D(t) = z(t,0) , \qquad y_N(t) = z_x(t,0) .
\end{equation}

For $\tau\in [-h,0]$, we define $u(\tau) = u_0(\tau) = c_{\theta_2} z_0(\tau,1) + s_{\theta_2} z_{0,x}(\tau,1)$. Without loss of generality, we introduce $q \in\mathcal{C}^0([0,1])$ and $q_c \in\R$ so that
\begin{equation}\label{eq: writting of tilde_q}
\tilde{q}(x) = q(x) - q_c , \quad q(x) > 0  .
\end{equation}

\subsubsection{Spectral reduction}

Defining the change of variable
\begin{equation}\label{eq: change of variable}
w(t,x) = z(t,x) - \frac{x^2}{c_{\theta_2} + 2 s_{\theta_2}} u(t) 
\end{equation}
we infer that (\ref{eq: PDE}) can be rewritten as the following equivalent homogeneous representation: 
\begin{subequations}\label{eq: PDE Dirichlet - homogeneous}
\begin{align}
& v(t) = \dot{u}(t) \\
& w_t(t,x) = (p(x) w_x(t,x))_x - \tilde{q}(x) w(t,x) + c w(t-h,x) \\
& \phantom{w_t(t,x) =} \;  + a(x) u(t) - c b(x) u(t-h) + b(x) v(t) \\ 
& c_{\theta_1} w(t,0) - s_{\theta_1} w_x(t,0) = 0 \\
& c_{\theta_2} w(t,1) + s_{\theta_2} w_x(t,1) = 0 \\
& y(t) = w(t,0) \\
& w(\tau,x) = w_0(\tau,x) , \quad \tau \in [-h,0]
\end{align}
\end{subequations}
where we have defined $a(x) = \frac{1}{c_{\theta_2} + 2 s_{\theta_2}} \{ 2p(x) + 2xp'(x) - x^2 \tilde{q}(x) \}$, $b(x) = -\frac{x^2}{c_{\theta_2} + 2 s_{\theta_2}}$, and $w_0(\tau,x) = z_0(\tau,x) - \frac{x^2}{c_{\theta_2} + 2 s_{\theta_2}} u_0(\tau)$. Introducing now the coefficients of projection defined by $z_n(t) = \left< z(t,\cdot) , \phi_n \right>$, $w_n(t) = \left< w(t,\cdot) , \phi_n \right>$, $a_n = \left< a , \phi_n \right>$, and $b_n = \left< b , \phi_n \right>$, we have in particular that
\begin{equation}\label{eq: link z_n and w_n}
w_n(t) = z_n(t) + b_n u(t), \quad n \geq 1 .
\end{equation}
On one hand, the projection of (\ref{eq: PDE}) into the Hilbert basis $(\phi_n)_{n \geq 1}$ implies that (see, e.g., \cite{lhachemi2019iss})
\begin{equation}\label{eq: dynamics z_n}
\dot{z}_n(t) = (-\lambda_n + q_c) z_n(t) + c z_n(t-h) + \beta_n u(t)
\end{equation}
with $\beta_n = a_n + (-\lambda_n+q_c)b_n = p(1) \{ - c_{\theta_2} \phi_n'(1) + s_{\theta_2} \phi_n(1) \} = O(\sqrt{\lambda_n})$, while, on the other hand, the projection of (\ref{eq: PDE Dirichlet - homogeneous}) gives
\begin{subequations}\label{eq: dynamics w_n}
\begin{align}
\dot{u}(t) & = v(t) \\
\dot{w}_n(t) & = (-\lambda_n + q_c) w_n(t) + c w_n(t-h) \\
& \phantom{=}\; + a_n u(t) - c b_n u(t-h) + b_n v(t) 
\end{align}
\end{subequations}
Finally the system outputs (\ref{eq: system output}) are expressed in terms of the coefficients of projection as follows:
\begin{equation}\label{eq: system output - series}
y_D(t) = \sum_{n \geq 1} w_n(t) \phi_n(0) , \quad
y_N(t) = \sum_{n \geq 1} w_n(t) \phi_n'(0) .
\end{equation}

\begin{remark}
Although the development in this paper concerns the case $\theta_1,\theta_2\in[0,\pi/2]$, this is not a limitation of the approach. The approach readily generalizes to $\theta_1,\theta_2\in[0,\pi)$. To do so, one needs to 1) select $q$ in (\ref{eq: writting of tilde_q}) sufficiently large positive so that (\ref{eq: inner product Af and f}) still holds; 2) possibly replace the change of variable formula (\ref{eq: change of variable}), in order to avoid a possible division by 0, by $w(t,x) = z(t,x) - \frac{x^\alpha}{c_{\theta_2} + \alpha s_{\theta_2}} u(t)$ for some $\alpha > 1$ selected so that $c_{\theta_2} + \alpha s_{\theta_2} \neq 0$.
\end{remark}

\section{Case of a Dirichlet measurement}\label{sec: Dirichlet measurement}

We consider in this section the PDE (\ref{eq: PDE}) with Dirichlet measurement $y_D(t)$ described by (\ref{eq: system output}) and with $\theta_1 \in (0,\pi/2]$.

\subsection{Control strategy}

Let $N_0 \geq 1$ be fixed such that $-\lambda_{n} + q_c + \vert c \vert < 0$ for all $n \geq N_0 + 1$. Let $N \geq N_0 + 1$ be arbitrarily given and to be specified later. We consider the following control strategy: 
\begin{subequations}\label{eq: controller part 1 - Dirichlet}
\begin{align}
\hat{w}_n(t) & = \hat{z}_n(t) + b_n u(t) \label{eq: controller 1 - Dirichlet} \\
\dot{\hat{z}}_n(t) & = (-\lambda_n+q_c) \hat{z}_n(t) + c \hat{z}_n(t-h) + \beta_n u(t) \label{eq: controller 2 - Dirichlet} \\
& \phantom{=}\; - l_n \left\{ \sum_{k = 1}^N \hat{w}_k(t) \phi_k(0) - y_D(t) \right\}  ,\; 1 \leq n \leq N_0 \nonumber \\
\dot{\hat{z}}_n(t) & = (-\lambda_n+q_c) \hat{z}_n(t) + c \hat{z}_n(t-h) \label{eq: controller 3 - Dirichlet} \\
& \phantom{=}\; + \beta_n u(t) ,\qquad N_0+1 \leq n \leq N \nonumber \\
u(t) & = \sum_{k=1}^{N_0} k_k \hat{z}_k(t) 
\end{align}
\end{subequations}
where $k_k,l_n \in\R$ are the feedback and observer gains, respectively. 

\begin{remark}\label{rem WP1}
The closed-loop system is composed of the PDE (\ref{eq: PDE}), the Dirichlet measurement $y_D(t)$ described by (\ref{eq: system output}), and the controller (\ref{eq: controller part 1 - Dirichlet}). Its well-posedness in terms of classical solutions for initial conditions $z_0 \in \mathcal{C}^0([-h,0];L^2(0,1))$ and $\hat{z}_n(\tau) = \hat{z}_n(0) \in\R$ for $\tau \in [-h,0]$ such that $z_0,u_0$ are Lipschitz continuous and $z_0(\tau,\cdot) \in H^2(0,1)$ with $c_{\theta_1} z_0(0,0) - s_{\theta_1} z_{0,x}(0,0) = 0$ and $u_0(0) = K \hat{Z}^{N_0}(0)$, is a direct consequence of \cite[Thm.~6.3.1 and~6.3.3]{pazy2012semigroups} by using a classical induction argument.

\end{remark}

\subsection{Truncated model for stability analysis}\label{subsec: Dirichlet - truncated model}

We introduce a finite-dimensional model that captures the $N_0$ first modes of the PDE and the dynamics (\ref{eq: controller part 1 - Dirichlet}) of the output feedback controller. To do so, introducing $\hat{Z}^{N_0} = \begin{bmatrix} \hat{z}_1 & \ldots & \hat{z}_{N_0} \end{bmatrix}^\top$ and $K = \begin{bmatrix} k_1 & \ldots & k_{N_0} \end{bmatrix}$, we have $u = K \hat{Z}^{N_0}$. Defining now the error of observation by $e_n = z_n - \hat{z}_n$ for all $1 \leq n \leq N$, we infer from (\ref{eq: controller 1 - Dirichlet}-\ref{eq: controller 2 - Dirichlet}) and using (\ref{eq: link z_n and w_n}) and (\ref{eq: system output - series}) that
\begin{align*}
\dot{\hat{z}}_n 
& = (-\lambda_n + q_c) \hat{z}_n + c \hat{z}_n(t-h) + \beta_n u + l_n \sum_{k=1}^{N} \phi_k(0) e_k + l_n \zeta
\end{align*}
for $1 \leq n \leq N_0$ with $\zeta(t) = \sum_{n \geq N+1} w_n(t) \phi_n(0)$. Hence with $E^{N_0} = \begin{bmatrix} e_1 & \ldots & e_{N_0} \end{bmatrix}^\top$, $\tilde{e}_n = \sqrt{\lambda_n} e_n$, and $\tilde{E}^{N - N_0} = \begin{bmatrix} \tilde{e}_{N_0 +1} & \ldots & \tilde{e}_{N} \end{bmatrix}^\top$, we obtain that
\begin{align}
\dot{\hat{Z}}^{N_0} 
& = ( A_0 + \mathfrak{B}_0 K ) \hat{Z}^{N_0} + c \hat{Z}^{N_0}(t-h) \nonumber \\
& \phantom{=}\; + LC_0 E^{N_0} + L\tilde{C}_1 \tilde{E}^{N-N_0} + L \zeta \label{eq: truncated model - 4 ODEs - 1}
\end{align}
where $A_0 = \mathrm{diag}(-\lambda_{1} + q_c , \ldots , -\lambda_{N_0} + q_c)$, $\mathfrak{B}_0 = \begin{bmatrix} \beta_1 & \ldots & \beta_{N_0} \end{bmatrix}^\top$, $C_0 = \begin{bmatrix} \phi_1(0) & \ldots & \phi_{N_0}(0) \end{bmatrix}$, $\tilde{C}_1 = \begin{bmatrix} \frac{\phi_{N_0 +1}(0)}{\sqrt{\lambda_{N_0 +1}}} & \ldots & \frac{\phi_{N}(0)}{\sqrt{\lambda_{N}}} \end{bmatrix}$, and $L = \begin{bmatrix} l_1 & \ldots & l_{N_0} \end{bmatrix}^\top$. Note that, following~\cite{lhachemi2020finite}, the scaled error $\tilde{e}_n$ has been introduced to ensure that $\Vert \tilde{C}_1 \Vert = O(1)$ as $N \rightarrow +\infty$. Defining also $A_1 = \mathrm{diag}(-\lambda_{N_0+1} + q_c , \ldots , -\lambda_{N} + q_c)$ and using (\ref{eq: dynamics z_n}) and (\ref{eq: controller 2 - Dirichlet}-\ref{eq: controller 3 - Dirichlet}), the error dynamics is given by
\begin{subequations}\label{eq: truncated model - 4 ODEs - 3}
\begin{align}
\dot{E}^{N_0} & = ( A_0 - L C_0 ) E^{N_0} + c E^{N_0}(t-h) \nonumber \\
& \phantom{=}\; - L \tilde{C}_1 \tilde{E}^{N-N_0} - L \zeta \\
\dot{\tilde{E}}^{N-N_0} & = A_1 \tilde{E}^{N-N_0} + c \tilde{E}^{N-N_0}(t-h)
\end{align}
\end{subequations}
Therefore, defining the vectors
\begin{equation}\label{eq: truncated model - def X}
X_1 = \mathrm{col}\left( \hat{Z}^{N_0} , E^{N_0} \right) , \quad X_2 = \tilde{E}^{N-N_0}
\end{equation}
we infer from (\ref{eq: truncated model - 4 ODEs - 1}-\ref{eq: truncated model - 4 ODEs - 3}) that
\begin{subequations}\label{eq: truncated model}
\begin{align}
\dot{X}_1 & = F_1 X_1 + F_2 X_2 + c X_1(t-h) + \mathcal{L} \zeta \label{eq: truncated model - 1} \\
\dot{X}_2 & = F_3 X_2 + c X_2(t-h) \label{eq: truncated model - 2}
\end{align}
\end{subequations}
where
\begin{equation*}
F_1 =
\begin{bmatrix}
A_0 + \mathfrak{B}_0 K & LC_0 \\
0 & A_0 - L C_0
\end{bmatrix} ,\;
F_2 =
\begin{bmatrix} 
L \tilde{C}_1 \\ - L \tilde{C}_1
\end{bmatrix} ,\;
F_3 = A_1
\end{equation*}
while $\mathcal{L} = \mathrm{col}(L,-L)$.

The direct application of the Hautus test~\cite[Thm.~3.1]{zhou1996robust} shows that the pairs $(A_0,\mathfrak{B}_0)$ and $(A_0,C_0)$ satisfy the Kalman condition~\cite{zhou1996robust}. Hence, one can always find gains $K \in\R^{1 \times N_0}$ and $L \in\R^{N_0}$ such that all the eigenvalues $\mu_i \in\C$, $1 \leq i \leq 2N_0$, of $F_1$ satisfy $\operatorname{Re} \mu_i < - \vert c \vert$. Introducing $\tilde{X} = \mathrm{col}\left( X_1 , X_2 , \zeta \right)$, we finally note that  
\begin{equation}\label{eq: derivative v of command input u}
u = K \hat{Z}^{N_0} = \tilde{K} X_1  , \;\;  \quad v = \dot{u} = E \tilde{X} + c \tilde{K} X_1(t-h)
\end{equation}
with $\tilde{K} = \begin{bmatrix} K & 0 \end{bmatrix}$ and $E = \tilde{K} \begin{bmatrix} F_1 & F_2 & \mathcal{L} \end{bmatrix}$.

\subsection{Main result}

\begin{theorem}\label{thm1}
Let $\theta_1 \in (0,\pi/2]$, $\theta_2 \in [0,\pi/2]$, $p \in\mathcal{C}^2([0,1])$ with $p > 0$, $\tilde{q} \in\mathcal{C}^0([0,1])$, and $c\in\R$ with $c \neq 0$. Let $q \in\mathcal{C}^0([0,1])$ and $q_c \in\R$ be such that (\ref{eq: writting of tilde_q}) holds. Let $N_0 \geq 1$ be such that $-\lambda_n + q_c + \vert c \vert < 0$ for all $n \geq N_0 + 1$. Let $K\in\R^{1 \times N_0}$ and $L\in\R^{N_0}$ be such that all the eigenvalues $\mu_i \in\C$, $1 \leq i \leq 2N_0$, of $F_1$ satisfy $\operatorname{Re} \mu_i < - \vert c \vert$. Let $\alpha_1,\alpha_2,\alpha_3,\alpha_4 > 0$ be fixed so that $\mathfrak{c} = 1-\frac{1}{2} \left( \frac{\vert c \vert}{\alpha_1} + \frac{1}{\alpha_2} + \frac{1}{\alpha_3} + \frac{\vert c \vert}{\alpha_4} \right) > 0$. For a given $N \geq N_0 +1$, assume that there exist $P \succ 0$, $Q_1,Q_2 \succeq 0$, and $r_1,r_2,\beta,\gamma > 0$ such that 
\begin{equation}\label{eq: thm1 - constraints}
\Psi \prec 0 , \quad \Theta_1 \prec 0 ,\quad \Theta_2 \prec 0, \quad \Theta_3 < 0, \quad \Theta_4 < 0 
\end{equation}
where 
\begin{subequations}
\begin{align}
\Psi & = \begin{bmatrix} \Psi_1 & P F_2 & P \mathcal{L} \\ \ast & \Psi_2 & 0 \\ \ast & \ast & - \beta \end{bmatrix} + 2\alpha_3\gamma \Vert \mathcal{R}_N b \Vert_{L^2}^2 E^\top E , \label{eq: Dirichlet LMI conditions - Psi} \\
\Psi_1 & = F_1^\top P + P F_1 + \vert c \vert P + Q_1 + \alpha_2\gamma \Vert \mathcal{R}_N a \Vert_{L^2}^2 \tilde{K}^\top \tilde{K} , \\
\Psi_2 & = r_1 ( 2 F_3 + \vert c \vert I ) + Q_2 , \\
\Theta_1 & = \vert c \vert P - Q_1 + \left( 2 \alpha_3 \vert c \vert + \alpha_4 \right) \gamma \vert c \vert \Vert \mathcal{R}_N b \Vert_{L^2}^2 \tilde{K}^\top \tilde{K} , \\
\Theta_2 & = r_1 \vert c \vert I - Q_2 , \\
\Theta_3 & = \gamma \alpha_1 \vert c \vert - r_2 , \label{eq: Dirichlet LMI conditions - Theta_5} \\
\Theta_4 & = 2 \gamma ( - \mathfrak{c} \lambda_{N+1} + q_c ) + \beta M_\phi + \frac{r_2}{\lambda_{N+1}}
\end{align}
\end{subequations}
with $M_\phi = \sum_{n \geq N+1} \frac{\vert \phi_n(0) \vert^2}{\lambda_n} < +\infty$. Then, for any given $h > 0$, there exist constants $\delta,M>0$ such that, for any initial conditions $z_0 \in \mathcal{C}^0([-h,0];L^2(0,1))$ and $\hat{z}_n(\tau) = \hat{z}_n(0) \in\R$ for $\tau \in [-h,0]$ so that $z_0,u_0$ are Lipschitz continuous and $z_0(\tau,\cdot) \in H^2(0,1)$ with $c_{\theta_1} z_0(0,0) - s_{\theta_1} z_{0,x}(0,0) = 0$ and $u_0(0) = K \hat{Z}^{N_0}(0)$, the trajectories of the closed-loop system composed of the PDE (\ref{eq: PDE}), the Dirichlet measurement $y_D(t)$ described by (\ref{eq: system output}), and the controller (\ref{eq: controller part 1 - Dirichlet}) satisfy 
\begin{align}
& \Vert z(t) \Vert_{H^1}^2 + \sum_{n = 1}^{N} \hat{z}_n(t)^2
\leq M e^{-2 \delta t} \left( \sup_{\tau \in [-h,0]} \Vert z_0(\tau) \Vert_{L^2}^2 \right. \nonumber \\
& \hspace{2.25cm} \left. + \Vert z_0(0) \Vert_{H^1}^2 + \Vert u_0 \Vert_{\infty}^2 + \sum_{n=1}^{N} \hat{z}_n(0)^2 \right)  \label{eq: thm1 stability estimate}
\end{align}
for all $t \geq 0$. Moreover, there always exist $R \succ 0$, $Q_1,Q_2 \succeq 0$, and $r_1,r_2,\beta,\gamma > 0$ such that (\ref{eq: thm1 - constraints}) hold provided $N$ is selected large enough.
\end{theorem}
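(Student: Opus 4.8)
The plan is to establish the exponential stability estimate by constructing a Lyapunov-Krasovskii functional that separately controls the finite-dimensional truncated dynamics $X_1, X_2$ and the infinite tail $\zeta$, and then to close the argument by dominating the cross-coupling and the delayed terms using the matrix inequalities (\ref{eq: thm1 - constraints}). Concretely, I would begin with a candidate of the form
\begin{align*}
V &= X_1^\top P X_1 + r_1 \Vert X_2 \Vert^2 + \gamma \sum_{n \geq N+1} \lambda_n w_n^2 \\
& \phantom{=}\; + \int_{t-h}^{t} e^{2\delta(s-t)}\!\left( X_1(s)^\top Q_1 X_1(s) + X_2(s)^\top Q_2 X_2(s) \right) ds
\end{align*}
where the finite-dimensional quadratic part matches the LMI variables $P, Q_1, Q_2, r_1$, the weighted tail term $\gamma \sum_{n\geq N+1}\lambda_n w_n^2 \asymp \gamma \Vert \mathcal{R}_N w \Vert_{H^1}^2$ captures the residual modes, and the integral Krasovskii terms absorb the delayed contributions $c X_1(t-h)$ and $c X_2(t-h)$.

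The core estimates would proceed as follows. First I would differentiate $V$ along (\ref{eq: truncated model}) together with the PDE dynamics (\ref{eq: PDE Dirichlet - homogeneous}) projected onto the tail. Differentiating the tail term uses that for $n \geq N+1$ the mode obeys $\dot{w}_n = (-\lambda_n + q_c) w_n + c w_n(t-h) + a_n u - c b_n u(t-h) + b_n v$; the coefficient $\mathfrak{c} > 0$ and the scalar parameters $\alpha_1,\alpha_2,\alpha_3,\alpha_4$ arise precisely from applying Young's inequality $2ab \leq \alpha a^2 + \alpha^{-1} b^2$ to split the delayed tail term, the control terms $a_n u$, $b_n u(t-h)$, and the input-rate term $b_n v$ against the diffusion dissipation $-\lambda_n w_n^2$. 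Summing over $n \geq N+1$ and using Parseval identities $\sum_{n\geq N+1} a_n^2 = \Vert \mathcal{R}_N a \Vert_{L^2}^2$, $\sum_{n\geq N+1} b_n^2 = \Vert \mathcal{R}_N b \Vert_{L^2}^2$ yields the inner-product couplings that feed into $\Psi_1$, $\Psi$, and $\Theta_1$. The coupling term $\mathcal{L}\zeta$ in (\ref{eq: truncated model - 1}) is handled by the block $P\mathcal{L}$ and the $-\beta$ entry of $\Psi$, bounding $\zeta^2 = \big(\sum_{n\geq N+1} w_n \phi_n(0)\big)^2 \leq M_\phi \sum_{n\geq N+1}\lambda_n w_n^2$ via Cauchy-Schwarz, which is where $M_\phi$ enters $\Theta_4$. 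Collecting the finite-dimensional quadratic form gives $\Psi \prec 0$; the delayed comparisons at $t-h$ give $\Theta_1 \prec 0$ and $\Theta_2 \prec 0$; the scalar $\Theta_3 < 0$ matches the $r_2$ slack used to dominate the $v$-dependent term against $\gamma\alpha_1\vert c\vert$; and $\Theta_4 < 0$ ensures the tail dissipation dominates, giving $\dot V \leq -2\delta V$ for $\delta$ small enough. Exponential decay of $V$ then transfers to the claimed $H^1$ estimate (\ref{eq: thm1 stability estimate}) by the norm equivalence (\ref{eq: inner product Af and f}) and the relations $z_n = w_n - b_n u$, $e_n = z_n - \hat z_n$.

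For the feasibility claim — that (\ref{eq: thm1 - constraints}) holds for $N$ large — the key observation is the asymptotic behavior of the $N$-dependent quantities. As $N \to +\infty$ we have $\lambda_{N+1} \to +\infty$ with $\lambda_{N+1} \gtrsim \pi^2 N^2 p_*$, while $\Vert \mathcal{R}_N a \Vert_{L^2}, \Vert \mathcal{R}_N b \Vert_{L^2} \to 0$ and $M_\phi \to 0$ since $\sum_{n\geq 1}\vert\phi_n(0)\vert^2/\lambda_n < +\infty$. I would fix $K, L$ (hence $F_1$, whose eigenvalues lie in $\operatorname{Re}\mu_i < -\vert c\vert$), choose $P \succ 0$ solving the strict Lyapunov inequality $F_1^\top P + P F_1 + \vert c\vert P \prec 0$ with $Q_1 := \epsilon I$ small and $r_1$ fixed so that $r_1(2F_3 + \vert c\vert I) + Q_2 \prec 0$ (possible since $-\lambda_n + q_c + \vert c\vert < 0$ for $n > N_0$ makes $F_3 = A_1$ strongly Hill-stable). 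The vanishing residual norms make the perturbation terms $\alpha_2\gamma\Vert\mathcal{R}_N a\Vert_{L^2}^2\tilde K^\top\tilde K$ and $2\alpha_3\gamma\Vert\mathcal{R}_N b\Vert_{L^2}^2 E^\top E$ in $\Psi$ negligible, preserving $\Psi \prec 0$, and similarly for $\Theta_1$. The only genuinely $N$-critical constraint is $\Theta_4$: since $-\mathfrak{c}\lambda_{N+1} \to -\infty$ dominates the bounded $\beta M_\phi + r_2/\lambda_{N+1}$ terms, choosing $N$ large forces $\Theta_4 < 0$. I expect the main obstacle to be the bookkeeping of the Young-inequality splitting so that the same four parameters $\alpha_1,\ldots,\alpha_4$ consistently produce $\mathfrak c$ in $\Theta_4$ while matching every coupling coefficient across $\Psi$, $\Theta_1$, and $\Theta_3$; getting these constants to line up simultaneously, rather than any single estimate, is the delicate part.
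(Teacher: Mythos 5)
Your overall architecture (Lyapunov--Krasovskii functional for the truncated dynamics plus a weighted $H^1$-type tail term, Young splittings generating $\alpha_1,\dots,\alpha_4$ and $\mathfrak{c}$, then asymptotic feasibility in $N$) matches the paper's proof, but there are three genuine gaps. First, your functional is missing the tail Krasovskii term $V_5 = r_2\int_{t-h}^{t} e^{-2\kappa(t-s)}\sum_{n\geq N+1} w_n(s)^2\,\mathrm{d}s$. The tail dynamics contain the delayed term $c\,w_n(t-h)$, and after Young's inequality this produces $\alpha_1\gamma\vert c\vert \sum_{n\geq N+1} w_n(t-h)^2$, which the instantaneous dissipation $-\lambda_n w_n(t)^2$ cannot absorb; only the derivative of such an integral term can cancel it, and this is precisely the role of $r_2$ and of $\Theta_3 = \gamma\alpha_1\vert c\vert - r_2$ (you misattribute $\Theta_3$ to the $v$-dependent term, which is in fact handled by the $E^\top E$ block of $\Psi$). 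Relatedly, you never treat the delayed input $u(t-h)$ appearing in the tail dynamics: for $t>h$ one substitutes $u(t-h)=\tilde{K}X_1(t-h)$ and folds it into the $\alpha_4$ term of $\Theta_1$, whereas for $t\in(0,h)$ it equals the initial datum $u_0(t-h)$ and must be propagated as a decaying disturbance --- this two-case analysis is exactly why $\Vert u_0\Vert_\infty^2$ appears on the right-hand side of (\ref{eq: thm1 stability estimate}).

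Second, and most seriously, decay of your $V$ does \emph{not} transfer to the claimed estimate, because the modes $z_n$ (equivalently $\hat{z}_n$) for $N_0+1\leq n\leq N$ are not controlled by $V$ at all: the truncated state consists of $\hat{Z}^{N_0}$, $E^{N_0}$, and $X_2=\tilde{E}^{N-N_0}$, so for the intermediate range only the observation errors $e_n=z_n-\hat z_n$ are seen, not $z_n$ or $\hat z_n$ individually, while both $\Vert z\Vert_{H^1}$ and $\sum_{n=1}^N\hat z_n(t)^2$ in (\ref{eq: thm1 stability estimate}) require them. The paper closes this with a second-stage argument: auxiliary functionals $\mathcal{V}_n = z_n^2 + e^{\eta h}\vert c\vert\int_{t-h}^t e^{-2\eta(t-s)}z_n(s)^2\,\mathrm{d}s$ for $N_0+1\leq n\leq N$, whose dynamics (\ref{eq: dynamics z_n}) are exponentially stable (since $-\lambda_n+q_c+e^{\eta h}\vert c\vert<0$) and driven by the input $u$, whose exponential decay was already established from $V$; this ISS-type cascade is an indispensable step your proposal omits. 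Finally, your feasibility sketch fails on $\Theta_1$: choosing $Q_1=\epsilon I$ small gives $\Theta_1 \approx \vert c\vert P - \epsilon I \succ 0$, violating $\Theta_1\prec 0$, which forces $Q_1\succ\vert c\vert P$ plus the vanishing term. Consequently $\Psi_1$ contains effectively $F_1^\top P+PF_1+2\vert c\vert P$ (not $\vert c\vert P$), so $P$ must be taken from the Lyapunov equation $F_1^\top P + PF_1 + 2\vert c\vert P = -I$ --- which is exactly where the hypothesis $\operatorname{Re}\mu_i<-\vert c\vert$ (rather than mere Hurwitz stability of $F_1$) is consumed; the paper additionally scales $\beta=\sqrt{N}$, $\gamma=1/N$, $r_1=\sqrt{N}/\delta^*$ and uses a Schur complement with $\Vert F_2\Vert=O(1)$, $\Vert E\Vert=O(1)$ to conclude $\Psi\prec 0$.
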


\begin{proof}
In view of (\ref{eq: thm1 - constraints}), and by a continuity argument, let $\kappa > 0$ be such that
\begin{subequations}\label{eq: thm1 - constraints bis}
\begin{align}
\Theta_{1,\kappa} & = \vert c \vert P - e^{-2\kappa h} Q_1 + 2 \alpha_3 \gamma c^2 \Vert \mathcal{R}_N b \Vert_{L^2}^2 \tilde{K}^\top \tilde{K} \preceq 0 , \\
\Theta_{1,\kappa}' & = \Theta_{1,\kappa} + \alpha_4\gamma \vert c \vert \Vert \mathcal{R}_N b \Vert_{L^2}^2 \tilde{K}^\top \tilde{K} \preceq 0 \label{eq: thm1 - constraints bis - 2} , \\
\Theta_{2,\kappa} & = r_1 \vert c \vert I - e^{-2\kappa h} Q_2 \preceq 0 , \\
\Theta_{3,\kappa} & = \gamma \alpha_1 \vert c \vert - r_2 e^{-2\kappa h} \preceq 0 .
\end{align}
\end{subequations}
Consider now the functional defined by $V(t) = \sum_{i = 1}^5 V_i(t)$ where
\begin{subequations}\label{eq: Lyapunov function H1 norm}
\begin{align}
V_1(t) & = X_1(t)^\top P X_1(t) + r_1 X_2(t)^\top X_2(t) , \\
V_2(t) & = \int_{t-h}^t e^{-2 \kappa (t-s)} X_1(s)^\top Q_1 X_1(s) \mathrm{d}s , \\
V_3(t) & = \int_{t-h}^t e^{-2 \kappa (t-s)} X_2(s)^\top Q_2 X_2(s) \mathrm{d}s , \\
V_4(t) & = \gamma \sum_{n \geq N+1} \lambda_n w_n(t)^2 , \\
V_5(t) & = r_2 \int_{t-h}^t e^{-2 \kappa (t-s)} \sum_{n \geq N+1} w_n(s)^2 \mathrm{d}s .
\end{align}
\end{subequations}
We compute and estimate (using essentially Young's inequality) the time derivative of the different functions $V_i$ as follows: 
\begin{align*}
\dot{V}_1 
& = 2 X_1^\top P \left\{ F_1 X_1 + F_2 X_2 + c X_1(t-h) + \mathcal{L} \zeta \right\} \\
& \phantom{=}\; + 2 r_1 X_2^\top \left\{ F_3 X_2 + c X_2(t-h) \right\} \\
& \leq X_1^\top ( F_1^\top P + P F_1 + \vert c \vert P ) X_1 + \vert c \vert X_1(t-h)^\top P X_1(t-h) \\
& \phantom{\leq}\; + r_1 X_2^\top (2 F_3 + \vert c \vert I ) X_2 + r_1 \vert c \vert \Vert X_2(t-h) \Vert^2 \\
& \phantom{\leq}\; + 2 X_1^\top P F_2 X_2 + 2 X_1^\top P \mathcal{L} \zeta ,
\end{align*}
\begin{align*}
\dot{V}_2
& = -2\kappa V_2 + X_1^\top Q_1 X_1 - e^{-2\kappa h} X_1(t-h)^\top Q_1 X_1(t-h) , \\
\dot{V}_3
& = -2\kappa V_3 + X_2^\top Q_2 X_2 - e^{-2\kappa h} X_2(t-h)^\top Q_2 X_2(t-h) ,
\end{align*}
\begin{align*}
& \dot{V}_4
= 2 \gamma \sum_{n \geq N+1} \lambda_n (-\lambda_n + q_c) w_n^2 + 2 \gamma c \sum_{n \geq N+1} \lambda_n w_n w_n(t-h) \\
& \phantom{\dot{V}_4 =}\; + 2\gamma \sum_{n \geq N+1} \lambda_n \left\{ a_n u - c b_n u(t-h) + b_n v \right\} w_n \\
& \leq \sum_{n \geq N+1} \lambda_n \left\{ 2\gamma ( - \mathfrak{c} \lambda_n + q_c ) \right\} w_n^2 + \alpha_1 \gamma \vert c \vert \sum_{n \geq N+1} w_n(t-h)^2 \\
& \phantom{\leq}\; + \alpha_2 \gamma \Vert \mathcal{R}_N a \Vert_{L^2}^2 X_1^\top \tilde{K}^\top \tilde{K} X_1 + 2\alpha_3\gamma \Vert \mathcal{R}_N b \Vert_{L^2}^2 \tilde{X}^\top E^\top E \tilde{X} \\
& \phantom{\leq}\; + 2\alpha_3\gamma c^2 \Vert \mathcal{R}_N b \Vert_{L^2}^2 X_1(t-h)^\top \tilde{K}^\top \tilde{K} X_1(t-h) \\
& \phantom{\leq}\; + \alpha_4\gamma \vert c \vert \Vert \mathcal{R}_N b \Vert_{L^2}^2 u(t-h)^2 ,
\end{align*}
and
\begin{align*}
\dot{V}_5
& = -2\kappa V_5 + r_2 \sum_{n \geq N+1} w_n(t)^2 - r_2 e^{-2\kappa h} \sum_{n \geq N+1} w_n(t-h)^2 .
\end{align*}
Moreover, since $\zeta = \sum_{n \geq N+1} \phi_n(0) w_n$, we also have $\zeta^2 \leq M_\phi \sum_{n \geq N+1} \lambda_n w_n^2$. Combining all the above estimates, we obtain that
\begin{align}
\dot{V}
& \leq \tilde{X}^\top \Psi \tilde{X} + \sum_{n \geq N+1} \lambda_n \Gamma_n w_n^2 + X_1(t-h)^\top \Theta_{1,\kappa} X_1(t-h) \nonumber \\
& \phantom{\leq}\; + X_2(t-h)^\top \Theta_{2,\kappa} X_2(t-h) + \Theta_{3,\kappa} \sum_{n \geq N+1} w_n(t-h)^2 \nonumber \\
& \phantom{\leq}\; - 2\kappa (V_2+V_3+V_5) + \alpha_4\gamma \vert c \vert \Vert \mathcal{R}_N b \Vert_{L^2}^2 u(t-h)^2 \label{eq: thm1 - estimate dotV}
\end{align}
with $\Gamma_n = 2\gamma (-\mathfrak{c} \lambda_n + q_c) + \beta M_\phi + \frac{r_2}{\lambda_n} \leq \Gamma_{N+1} = \Theta_4$ for all $n \geq N+1$ where we have used that $\mathfrak{c} > 0$. Invoking (\ref{eq: thm1 - constraints}) and (\ref{eq: thm1 - constraints bis}), we deduce in the case $t \in (0,h)$ that
\begin{align*}
\dot{V} & \leq \tilde{X}^\top \Psi \tilde{X} - \vert \Theta_4 \vert \sum_{n \geq N+1} \lambda_n w_n^2 - 2\kappa (V_2+V_3+V_5) \\
& \phantom{\leq}\; + \alpha_4 \gamma \vert c \vert \Vert \mathcal{R}_N b \Vert_{L^2}^2 u_0(t-h)^2 .
\end{align*}
Similarly in the case $t > h$, using (\ref{eq: derivative v of command input u}) and based on $\Theta_{1,\kappa}'$ defined by (\ref{eq: thm1 - constraints bis - 2}), we obtain that
\begin{equation*}
\dot{V} \leq \tilde{X}^\top \Psi \tilde{X} - \vert \Theta_4 \vert \sum_{n \geq N+1} \lambda_n w_n^2 - 2\kappa (V_2+V_3+V_5) .
\end{equation*}
Recalling that $\Psi \prec 0$ while $\kappa,\vert\Theta_4 \vert > 0$, we infer the existence of $\tilde{\kappa},c_1 > 0$ such that $\dot{V} \leq - 2 \tilde{\kappa} V + c_1 u_0(t-h)$ for all $t \in (0,h)$ while $\dot{V} \leq - 2 \tilde{\kappa} V$ for all $t > h$. The integration of the two above estimates implies the existence of a constant $c_2 > 0$ so that 
\begin{equation}\label{eq: thm1 estimate V}
V(t) \leq c_2 e^{-2\tilde{\kappa} t} ( V(0) + \Vert u_0 \Vert_{\infty}^2 )
\end{equation}
for all $t \geq 0$. From (\ref{eq: derivative v of command input u}) and the definition of $V$, we also infer that 
\begin{equation}\label{eq: thm1 estimate u}
u(t)^2 \leq c_3 V(t) \leq c_4 e^{-2\tilde{\kappa} t} ( V(0) + \Vert u_0 \Vert_{\infty}^2 )
\end{equation}
for some constants $c_3,c_4 > 0$.

Since the dynamics (\ref{eq: dynamics z_n}) of the modes $z_n$ for $N_0 +1 \leq n \leq N$ is not captured by (\ref{eq: truncated model}) and the functional $V$ (only the dynamics of the observation error $e_n$ is), we need to perform an additional stability analysis by taking advantage of (\ref{eq: thm1 estimate u}). Consider $\mathcal{V}_n = \mathcal{V}_{n,1} + \mathcal{V}_{n,2}$ with $\mathcal{V}_{n,1} = z_n^2$ and $\mathcal{V}_{n,2} = e^{\eta h} \vert c \vert \int_{t-h}^t e^{-2\eta(t-s)} z_n(s)^2 \mathrm{d}s$ where $\eta > 0$ is selected such that $-\lambda_{N_0 +1} + q_c + e^{\eta h} \vert c \vert < 0$. The computation of the time derivative of $\mathcal{V}_n$ along the trajectories of (\ref{eq: dynamics z_n}) gives
\begin{align*}
\dot{\mathcal{V}}_n
& = 2 \left\{ (-\lambda_n + q_c) z_n + c z_n(t-h) + \beta_n u \right\} z_n \\
& \phantom{=}\; - 2\eta \mathcal{V}_{n,2} + e^{\eta h} \vert c \vert z_n^2 - e^{-\eta h} \vert c \vert  z_n(t-h)^2 \\
& \leq  \left\{ 2 \left( -\lambda_n + q_c + e^{\eta h} \vert c \vert \right) + \epsilon \right\} z_n^2 - 2 \eta \mathcal{V}_{n,2} +\frac{\beta_n^2}{\epsilon} u^2 .
\end{align*}
for any $\epsilon > 0$ where we have used Young's inequality. Since $-\lambda_n + q_c + e^{\eta h} \vert c \vert \leq -\lambda_{N_0 +1} + q_c + e^{\eta h} \vert c \vert < 0$ for all $n \geq N_0 + 1$, setting $\epsilon = \lambda_{N_0 +1} - q_c - e^{\eta h} \vert c \vert > 0$, we infer the existence of constants $c_5>0$ and $\delta  \in (0,\tilde{\kappa})$ such that $\dot{\mathcal{V}}_n \leq -2\delta  \mathcal{V}_n + c_5 u^2$ for all $N_0 +1 \leq n \leq N$. From (\ref{eq: thm1 estimate u}), we deduce the existence of a constant $c_6> 0$ so that 
\begin{equation}\label{eq: thm1 estimate Nu}
\mathcal{V}_n(t) \leq c_6 e^{-2 \delta  t} ( \mathcal{V}_n(0) + V(0) + \Vert u_0 \Vert_{\infty}^2 )
\end{equation}
for all $t \geq 0$ and $N_0 +1 \leq n \leq N$.

From the definition of $V$ and $\mathcal{V}_n$ and using (\ref{eq: inner product Af and f}), we obtain from (\ref{eq: thm1 estimate V}-\ref{eq: thm1 estimate Nu}) the existence of constants $c_7,c_8,c_9 > 0$ such that 
\begin{align*}
& u(t)^2 + \sum_{n=1}^{N} z_n(t)^2 + \sum_{n=1}^{N} \hat{z}_n(t)^2 + \sum_{n \geq N+1} \lambda_n w_n(t)^2 \\
& \quad \leq c_7 \left( V(t) + \sum_{k = N_0 + 1}^{N} \mathcal{V}_k(t) \right) \\
& \quad \leq c_8 e^{-2\delta t} \left( V(0) + \Vert u_0 \Vert_{\infty}^2 + \sum_{k = N_0 + 1}^{N} \mathcal{V}_k(0) \right) \\
& \quad \leq c_9 e^{-2\delta t} \left( \sup_{\tau \in [-h,0]} \Vert w_0(\tau) \Vert_{L^2}^2 + \Vert w_0(0) \Vert_{H^1}^2 \right. \\
& \hspace{4cm} \left. + \Vert u_0 \Vert_{\infty}^2 + \sum_{n=1}^{N} \hat{z}_n(0)^2 \right) .
\end{align*}
Using now (\ref{eq: inner product Af and f}) and (\ref{eq: change of variable}), we deduce that (\ref{eq: thm1 stability estimate}) holds true for some constant $M > 0$.

It remains to show that there always exist $P \succ 0$, $Q_1,Q_2 \succeq 0$, and $r_1,r_2,\beta,\gamma > 0$ such that the constraints (\ref{eq: thm1 - constraints}) hold provided $N$ is selected large enough. Since all the eigenvalues of $F_1$ have a real part strictly less that $- \vert c \vert < 0$, the matrix $F_1 + \vert c \vert I$ is Hurwitz. Hence we can define $P \succ 0$ to be the solution to the Lyapunov equation $F_1^\top P + P F_1 + 2 \vert c \vert P = - I$. Let $\delta^* = \lambda_{N_0 +1} - q_c - \vert c \vert > 0$, $\epsilon_1 = 1/(2 \vert c \vert \Vert P \Vert) > 0$, and $\epsilon_2 = \delta^*/ \vert c \vert > 0$. We fix $r_2 > 0$ so that $r_2 > \alpha_1 \vert c \vert$. For any given $N \geq N_0 +1$ we fix $\beta = \sqrt{N} > 0$, $\gamma = 1/N > 0$, and $r_1 = \beta/\delta^* = \sqrt{N}/\delta^* > 0$. We define $Q_1 = (1+\epsilon_1) \vert c \vert \left\{ P + ( 2 \alpha_3 \vert c \vert + \alpha_4 ) \gamma \Vert \mathcal{R}_N b \Vert_{L^2}^2 \tilde{K}^\top \tilde{K} \right\} \succeq 0$ and $Q_2 = (1+\epsilon_2) r_1 \vert c \vert I \succeq 0$. This ensures that $\Theta_1,\Theta_2 \prec 0$ for any $N \geq N_0 + 1$. Since $0 < \gamma \leq 1$, we also have $\Theta_3 < 0$ for any $N \geq N_0 + 1$. Noting that $F_3 \preceq (-\lambda_{N_0 +1} + q_c)I$, we have $\Psi_2 = 2 r_1 \left\{ F_3 + (1+\epsilon_2/2) \vert c \vert I \right\} \preceq 2 r_1 \left\{ -\lambda_{N_0 +1} + q_c + (1+\epsilon_2/2) \vert c \vert \right\} I = - \delta^* r_1 I = - \beta I \prec 0$ for any $N \geq N_0 + 1$. From the definitions of $P$ and $\epsilon_1$, we infer that $\Psi_1 \preceq - \frac{1}{2} I + \gamma \left\{ \alpha_2 \Vert \mathcal{R}_N a \Vert_{L^2}^2 + (1+\epsilon_1) \vert c \vert ( 2 \alpha_3 \vert c \vert + \alpha_4 ) \Vert \mathcal{R}_N b \Vert_{L^2}^2 \right\} \tilde{K}^\top \tilde{K}$  which shows that $\Psi_1 \preceq - \frac{1}{4} I$ for $N$ large enough. Hence, again for $N$ large enough, 
\begin{align*}
\Psi \preceq 
\begin{bmatrix} 
- \frac{1}{4} I & P F_2 & P \mathcal{L} \\
\ast & - \beta I & 0 \\
\ast & \ast & -\beta
\end{bmatrix}
+ 2\alpha_3\gamma \Vert \mathcal{R}_N b \Vert_{L^2}^2 E^\top E .
\end{align*}
Recalling that $\beta = \sqrt{N}$ and $\gamma = 1/N$ while $\Vert F_2 \Vert = O(1)$ and $\Vert E \Vert = O(1)$ as $N \rightarrow + \infty$, the Schur complement implies that $\Psi \prec 0$ for $N$ large enough. Finally, since $\mathfrak{c} > 0$, we have $\Theta_4 \rightarrow - \infty$ when $N \rightarrow + \infty$. We have shown for $N \geq N_0 +1$ large enough the existence of $R \succ 0$, $Q_1,Q_2 \succeq 0$, and $r_1,r_2,\beta,\gamma > 0$ such that (\ref{eq: thm1 - constraints}) hold. This completes the proof.
\end{proof}

\begin{remark}
For any given $N \geq N_0 + 1$, the constraints (\ref{eq: thm1 - constraints}) of Theorem~\ref{thm1} with decision variables $P \succ 0$, $Q_1,Q_2 \succeq 0$, and $r_1,r_2,\beta,\gamma > 0$ take the form of LMIs that are independent of the value of the state delay $h > 0$. Hence, selecting $N \geq N_0 + 1$ independently of $h > 0$ and such that the constraints (\ref{eq: thm1 - constraints}) are feasible, we obtain the exponential stability of the closed-loop system composed of the PDE (\ref{eq: PDE}), the Dirichlet measurement $y_D(t)$ defined in (\ref{eq: system output}), and the controller (\ref{eq: controller part 1 - Dirichlet}) for any $h > 0$. Note however that the constants $M,\delta > 0$ of the exponential stability estimate (\ref{eq: thm1 stability estimate}) do depend on the specific value of the delay $h > 0$.
\end{remark}

\section{Case of a Neumann measurement}\label{sec: Neumann measurement}

We now consider in this section the PDE (\ref{eq: PDE}) with Neumann measurement $y_N(t)$ described by (\ref{eq: system output}) and with $\theta_1 \in [0,\pi/2)$.

\subsection{Control strategy}

Let $N_0 \geq 1$ be fixed such that $-\lambda_{n} + q_c + \vert c \vert < 0$ for all $n \geq N_0 + 1$. Let $N \geq N_0 + 1$ be arbitrarily given and to be specified later. We consider the following control strategy: 
\begin{subequations}\label{eq: controller part 1 - Neumann}
\begin{align}
\hat{w}_n(t) & = \hat{z}_n(t) + b_n u(t) \label{eq: controller 1 - Neumann} \\
\dot{\hat{z}}_n(t) & = (-\lambda_n+q_c) \hat{z}_n(t) + c \hat{z}_n(t-h) + \beta_n u(t) \label{eq: controller 2 - Neumann} \\
& \phantom{=}\; - l_n \left\{ \sum_{k = 1}^N \hat{w}_k(t) \phi_k'(0) - y_N(t) \right\}  ,\; 1 \leq n \leq N_0 \nonumber \\
\dot{\hat{z}}_n(t) & = (-\lambda_n+q_c) \hat{z}_n(t) + c \hat{z}_n(t-h) \label{eq: controller 3 - Neumann} \\
& \phantom{=}\; + \beta_n u(t) ,\qquad N_0+1 \leq n \leq N \nonumber \\
u(t) & = \sum_{k=1}^{N_0} k_k \hat{z}_k(t) 
\end{align}
\end{subequations}
where $k_k,l_n \in\R$ are the feedback and observer gains, respectively. The well-posedness of the resulting closed-loop system follows from the same arguments that the ones of Remark~\ref{rem WP1}.

\subsection{Truncated model for stability analysis}

We proceed exactly as in Subsection~\ref{subsec: Dirichlet - truncated model} but we replace the definitions of $\zeta$, $\tilde{e}_n$, $C_0$, and $\tilde{C}_1$ by the following: $\zeta(t) = \sum_{n \geq N+1} w_n(t) \phi_n'(0)$, $\tilde{e}_n = \lambda_n e_n$, $C_0 = \begin{bmatrix} \phi_{1}'(0) & \ldots & \phi_{N}'(0) \end{bmatrix}$, and $\tilde{C}_1 = \begin{bmatrix} \frac{\phi_{N_0 +1}'(0)}{\lambda_{N_0 +1}} & \ldots & \frac{\phi_{N}'(0)}{\lambda_{N}} \end{bmatrix}$. Here, following~\cite{lhachemi2020finite}, the scaled error $\tilde{e}_n$ has been defined to ensure that $\Vert \tilde{C}_1 \Vert = O(1)$ as $N \rightarrow +\infty$. Introducing the vectors $X_1,X_2$ as in (\ref{eq: truncated model - def X}), we infer that (\ref{eq: truncated model}-\ref{eq: derivative v of command input u}) hold. Finally, the pairs $(A_0,\mathfrak{B}_0)$ and $(A_0,C_0)$ satisfy the Kalman condition~\cite{zhou1996robust}.

\subsection{Main result}

\begin{theorem}\label{thm2}
Let $\theta_1 \in [0,\pi/2)$, $\theta_2 \in [0,\pi/2]$, $p \in\mathcal{C}^2([0,1])$ with $p > 0$, $\tilde{q} \in\mathcal{C}^0([0,1])$, and $c\in\R$ with $c \neq 0$. Let $q \in\mathcal{C}^0([0,1])$ and $q_c \in\R$ be such that (\ref{eq: writting of tilde_q}) holds. Let $N_0 \geq 1$ be such that $-\lambda_n + q_c + \vert c \vert < 0$ for all $n \geq N_0 + 1$. Let $K\in\R^{1 \times N_0}$ and $L\in\R^{N_0}$ be such that all the eigenvalues $\mu_i \in\C$, $1 \leq i \leq 2N_0$, of $F_1$ satisfy $\operatorname{Re} \mu_i < - \vert c \vert$. Let $\alpha_1,\alpha_2,\alpha_3,\alpha_4 > 0$ be fixed so that $\mathfrak{c} = 1-\frac{1}{2} \left( \frac{\vert c \vert}{\alpha_1} + \frac{1}{\alpha_2} + \frac{1}{\alpha_3} + \frac{\vert c \vert}{\alpha_4} \right) > 0$. For a given $N \geq N_0 +1$, assume that there exist $\epsilon\in(0,1/2]$, $P \succ 0$, $Q_1,Q_2 \succeq 0$, and $r_1,r_2,\beta,\gamma > 0$ such that 
\begin{equation}\label{eq: thm2 - constraints}
\Psi \prec 0 , \quad \Theta_1 \prec 0 ,\quad \Theta_2 \prec 0, \quad \Theta_3 < 0, \quad \Theta_4 < 0, \quad \Theta_5 > 0 
\end{equation}
where $\Psi,\Theta_1,\Theta_2,\Theta_3$ are defined by (\ref{eq: Dirichlet LMI conditions - Psi}-\ref{eq: Dirichlet LMI conditions - Theta_5}) while
\begin{subequations}
\begin{align}
\Theta_4 & = 2 \gamma ( - \mathfrak{c} \lambda_{N+1} + q_c ) + \beta M_\phi(\epsilon) \lambda_{N+1}^{1/2+\epsilon} + \frac{r_2}{\lambda_{N+1}} \\
\Theta_5 & = 2 \gamma \mathfrak{c} - \frac{\beta M_\phi(\epsilon)}{\lambda_{N+1}^{1/2-\epsilon}}
\end{align}
\end{subequations}
with $M_\phi(\epsilon) = \sum_{n \geq N+1} \frac{\vert \phi'_n(0) \vert^2}{\lambda_n^{3/2+\epsilon}} < +\infty$. Then, for any given $h > 0$, there exist constants $\delta,M>0$ such that, for any initial conditions $z_0 \in \mathcal{C}^0([-h,0];L^2(0,1))$ and $\hat{z}_n(\tau) = \hat{z}_n(0) \in\R$ for $\tau \in [-h,0]$ so that $z_0,u_0$ are Lipschitz continuous and $z_0(\tau,\cdot) \in H^2(0,1)$ with $c_{\theta_1} z_0(0,0) - s_{\theta_1} z_{0,x}(0,0) = 0$ and $u_0(0) = K \hat{Z}^{N_0}(0)$, the trajectories of the closed-loop system composed of the PDE (\ref{eq: PDE}), the Neumann measurement $y_N(t)$ described by (\ref{eq: system output}), and the controller (\ref{eq: controller part 1 - Neumann}) satisfy  (\ref{eq: thm1 stability estimate}) for all $t \geq 0$. Moreover, there always exist $\epsilon \in (0,1/2]$, $P \succ 0$, $Q_1,Q_2 \succeq 0$ and $r_1,r_2,\beta,\gamma > 0$ such that (\ref{eq: thm1 - constraints}) hold provided $N$ is selected large enough.
\end{theorem}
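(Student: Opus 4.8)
The plan is to run the proof of Theorem~\ref{thm1} essentially verbatim, since the truncated model (\ref{eq: truncated model}-\ref{eq: derivative v of command input u}) and the identity $u=\tilde K X_1$ take the same form in the Neumann setting. I would reuse the Lyapunov functional $V=\sum_{i=1}^5 V_i$ of (\ref{eq: Lyapunov function H1 norm}), extract the same $\kappa>0$ by continuity from the constraints (\ref{eq: thm2 - constraints}), and compute each $\dot V_i$ exactly as before. The only term that genuinely changes is the residual output $\zeta=\sum_{n\geq N+1}\phi_n'(0)w_n$, which enters through $2X_1^\top P\mathcal{L}\zeta$ in $\dot V_1$; every other estimate carries over unchanged.

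The crux is then the bound on $\zeta^2$. Because $\phi_n'(0)=O(\sqrt{\lambda_n})$ the Dirichlet-type estimate diverges, so I would instead apply Cauchy--Schwarz with the weight $\lambda_n^{3/2+\epsilon}$: writing $\phi_n'(0)w_n=\big(\phi_n'(0)/\lambda_n^{3/4+\epsilon/2}\big)\big(\lambda_n^{3/4+\epsilon/2}w_n\big)$ yields $\zeta^2\leq M_\phi(\epsilon)\sum_{n\geq N+1}\lambda_n^{3/2+\epsilon}w_n^2$, where convergence of $M_\phi(\epsilon)$ forces $\epsilon>0$. Adding and subtracting $\beta\zeta^2$ — folding $-\beta\zeta^2$ into the quadratic form $\tilde X^\top\Psi\tilde X$ as in Theorem~\ref{thm1} and estimating the remaining $+\beta\zeta^2$ by the inequality above — reproduces the structure $\dot V\leq \tilde X^\top\Psi\tilde X+\sum_{n\geq N+1}\lambda_n\Gamma_n w_n^2+(\text{delay terms})-2\kappa(V_2+V_3+V_5)$, but now with $\Gamma_n=2\gamma(-\mathfrak{c}\lambda_n+q_c)+\beta M_\phi(\epsilon)\lambda_n^{1/2+\epsilon}+\tfrac{r_2}{\lambda_n}$.

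The main obstacle, and the reason for the extra constraint $\Theta_5>0$, is that unlike the Dirichlet case $\Gamma_n$ is \emph{not} manifestly decreasing: the term $\beta M_\phi(\epsilon)\lambda_n^{1/2+\epsilon}$ grows in $n$. To still obtain $\Gamma_n\leq\Gamma_{N+1}=\Theta_4$ for all $n\geq N+1$, I would factor $\Gamma_n=-\lambda_n\big(2\gamma\mathfrak{c}-\beta M_\phi(\epsilon)/\lambda_n^{1/2-\epsilon}\big)+2\gamma q_c+r_2/\lambda_n$. The restriction $\epsilon\leq 1/2$ makes $\lambda_n^{1/2-\epsilon}$ nondecreasing, so the bracket is $\geq\Theta_5>0$ uniformly over $n\geq N+1$; combined with $\lambda_n\geq\lambda_{N+1}$ and $r_2/\lambda_n\leq r_2/\lambda_{N+1}$ this gives $\Gamma_n\leq\Theta_4$, exactly as the definitions of $\Theta_4,\Theta_5$ were engineered to ensure. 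With $\Theta_4<0$ and $\Psi\prec0$ in hand, the remainder is word-for-word Theorem~\ref{thm1}: the bound $\dot V\leq-2\tilde\kappa V+c_1u_0(t-h)^2$ for $t<h$ and $\dot V\leq-2\tilde\kappa V$ for $t>h$, the auxiliary analysis of the modes $N_0+1\leq n\leq N$ via $\mathcal{V}_n$, and the assembly of (\ref{eq: thm1 stability estimate}) through (\ref{eq: inner product Af and f}) and (\ref{eq: change of variable}).

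For feasibility I would again fix $P$ by $F_1^\top P+PF_1+2\vert c\vert P=-I$, take $r_2>\alpha_1\vert c\vert$, and define $Q_1,Q_2$ as in Theorem~\ref{thm1}, which secure $\Theta_1,\Theta_2,\Theta_3\prec0$ and $\Psi_2=-\beta I$. The essential change is the scaling: the Dirichlet choice $\beta=\sqrt N,\gamma=1/N$ would render $\Theta_5$ negative, so I would instead take $\beta=N^{a}$, $\gamma=N^{-b}$ (with $r_1=\beta/\delta^*$, $\delta^*=\lambda_{N_0+1}-q_c-\vert c\vert$) for some $a>0$ with $a+b<1$, e.g. $a=b=1/4$. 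Using $M_\phi(\epsilon)=O(N^{-2\epsilon})$ and $\lambda_{N+1}$ of order $N^2$, one gets $\beta M_\phi(\epsilon)/\lambda_{N+1}^{1/2-\epsilon}=O(N^{a-1})$ and $\beta M_\phi(\epsilon)\lambda_{N+1}^{1/2+\epsilon}=O(N^{a+1})$ (the $\epsilon$ cancelling in both), so $\Theta_5=2\gamma\mathfrak{c}-O(N^{a-1})>0$ while the negative term $2\gamma\mathfrak{c}\lambda_{N+1}$ of order $N^{2-b}$ dominates the positive $O(N^{a+1})$ in $\Theta_4$, giving $\Theta_4<0$ — both comparisons holding precisely because $a+b<1$. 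Finally $\Psi\prec0$ follows from the same Schur-complement estimate as in Theorem~\ref{thm1}, with $\Psi_1\preceq-\tfrac14 I$ since $\gamma(\Vert\mathcal{R}_N a\Vert_{L^2}^2+\Vert\mathcal{R}_N b\Vert_{L^2}^2)\to0$ and the off-diagonal blocks controlled because $\beta\to\infty$ while $\Vert F_2\Vert,\Vert E\Vert=O(1)$, the latter thanks to the scaling $\tilde e_n=\lambda_n e_n$ that keeps $\Vert\tilde C_1\Vert=O(1)$.
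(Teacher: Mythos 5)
Your proposal is correct and follows essentially the same route as the paper's proof: the weighted Cauchy--Schwarz bound $\zeta^2 \leq M_\phi(\epsilon)\sum_{n\geq N+1}\lambda_n^{3/2+\epsilon}w_n^2$, the factorization of $\Gamma_n$ using $\epsilon\leq 1/2$ and $\Theta_5>0$ to get $\Gamma_n\leq\Gamma_{N+1}=\Theta_4$, and the reduction of everything else to the argument of Theorem~\ref{thm1}. Your feasibility scalings $\beta=N^a$, $\gamma=N^{-b}$ with $a+b<1$ are a sound (and slightly more general) version of the paper's specific choice $\epsilon=1/8$, $\beta=N^{1/8}$, $\gamma=1/N^{3/16}$, which corresponds to $a=1/8$, $b=3/16$.
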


\begin{proof}
We proceed as in the proof of Theorem~\ref{thm1} but we replace the estimate of $\zeta$ by the following: $\zeta^2 \leq M_\phi(\epsilon) \sum_{n \geq N+1} \lambda_n^{3/2+\epsilon} w_n^2$, we infer that (\ref{eq: thm1 - estimate dotV}) holds with $\Gamma_n = 2\gamma (-\mathfrak{c} \lambda_n + q_c) + \beta M_\phi(\epsilon) \lambda_{n}^{1/2+\epsilon} + \frac{r_2}{\lambda_n}$. For $n \geq N+1$, $\lambda_n^{1/2+\epsilon} = \lambda_n/\lambda_n^{1/2-\epsilon} \leq \lambda_n/\lambda_{N+1}^{1/2-\epsilon}$ hence $\Gamma_n \leq -\Theta_5 \lambda_n + 2\gamma q_c + \frac{r_2}{\lambda_n} \leq \Gamma_{N+1} = \Theta_4 < 0$ where we have used that $\Theta_5 > 0$. The proof of the stability estimate (\ref{eq: thm1 stability estimate}) follows now the same arguments that the ones of the proof of Theorem~\ref{thm1}. Similarly, one can show the feasibility of the constraints (\ref{eq: thm2 - constraints}) for $N$ large enough by setting $\epsilon = 1/8$, $\beta = N^{1/8}$, and $\gamma = 1/N^{3/16}$. 
\end{proof}

\section{Numerical example}\label{sec: numerics}

Consider the reaction-diffusion system described by (\ref{eq: PDE}) with $p = 1$, $\tilde{q} = -1$, $c = 3$, $\theta_1 = \pi/3$, and $\theta_2 = 0$. In this scenario, the delay-free system ($h=0$) is open-loop unstable. 

Considering first the case of the Dirichlet measurement $y_D(t)$, we set the feedback gain as $K = -2.2316$ while the observer gain is set as $L = 4.7450$. Then the constraints (\ref{eq: thm1 - constraints}) are found feasible for $N = 2$ using \textsc{Matlab} LMI toolbox. Hence Theorem~\ref{thm1} ensures for any arbitrary value of the state delay $h > 0$ the exponential stability of the closed-loop system composed of the plant (\ref{eq: PDE}) and the controller (\ref{eq: controller part 1 - Dirichlet}) for system trajectories evaluated in $H^1$ norm in the sense of (\ref{eq: thm1 stability estimate}). 

Considering now the case of the Neumann measurement $y_N(t)$, we set $K = -1.0149$ and $L = 4.0937$. In this case, the constraints (\ref{eq: thm2 - constraints}) of Theorem~\ref{thm2} are found feasible for $N = 4$, ensuring the exponential stability of the closed-loop system for any $h > 0$.
    
We illustrate the temporal behavior of the closed-loop system with some numerical simulations in the case of a Dirichlet measurement. We consider the delay $h = 1\,\mathrm{s}$ and the initial condition $z_0(\tau,x) = 10 \cos(5\pi(t-1)) x^2 (x-3/4)$. The simulation results are depicted in Fig.~\ref{fig: sim1 CL}, showing the exponential decay to zero of the PDE trajectory and the error of observation. 

Finally, in order to illustrate the impact of the value of state delay $h > 0$ on the exponential decay rate of the system trajectories, we consider $h \in \{0.5,1,2,5,10\}\,\mathrm{s}$ and $z_0(\tau,x) = 10 x^2 (x-3/4)$. The time domain evolution of the $H^1$ norm $\Vert z(t,\cdot) \Vert_{H^1}$ of the PDE trajectories are depicted in Fig~\ref{fig: sim2} with log scale used for the $Y$ axis. As expected, the exponential decay rate decreases as the value of the state delay $h > 0$ increases.

\begin{figure}
     \centering
     	\subfigure[State of the reaction-diffusion system $z(t,x)$]{
		\includegraphics[width=3.5in]{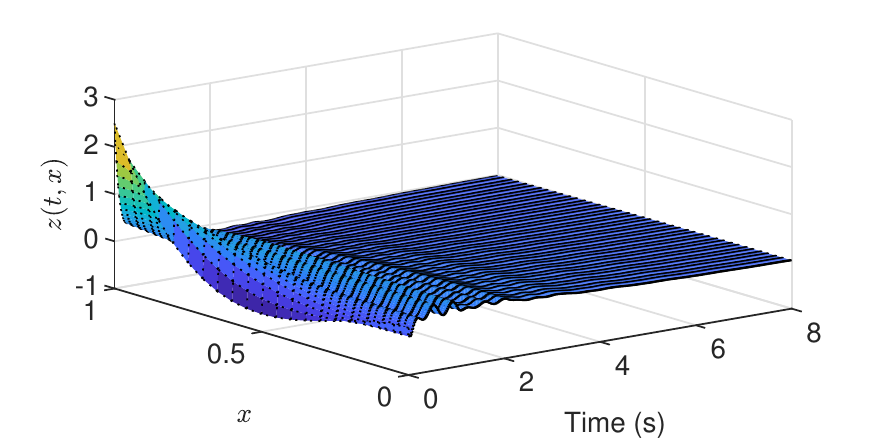}
		}
     	\subfigure[Error of observation $e(t,x) = z(t,x) - \hat{z}(t,x)$]{
		\includegraphics[width=3.5in]{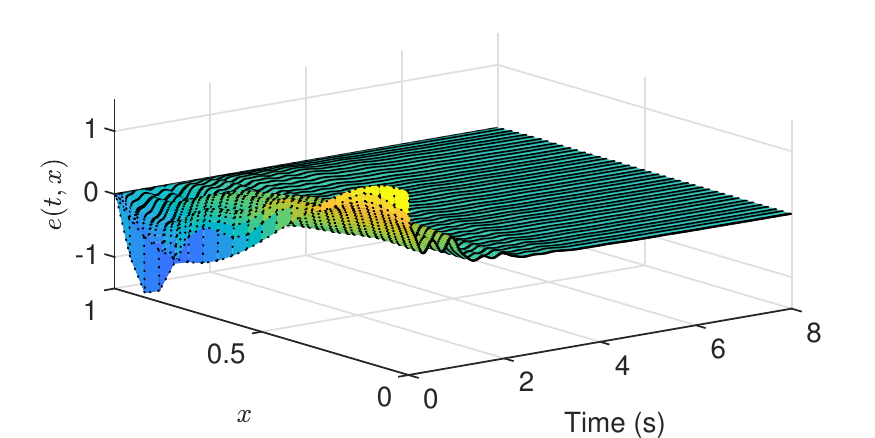}
		}
     \caption{Time evolution of the closed-loop system for Dirichlet measurement with state delay $h = 1\,\mathrm{s}$}
     \label{fig: sim1 CL}
\end{figure}

\begin{figure}
\centering
\includegraphics[width=3.5in]{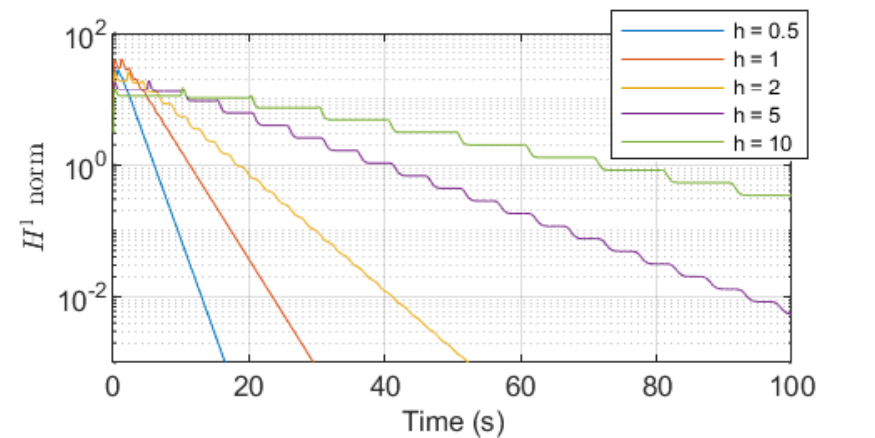}
\caption{Time evolution of the $H^1$ norm $\Vert z(t,\cdot) \Vert_{H^1}$ of the PDE trajectories in closed-loop for Dirichlet measurement with state delays $h \in \{0.5,1,2,5,10\}\,\mathrm{s}$}
\label{fig: sim2}
\end{figure}

\section{Conclusion}\label{sec: conclusion}

This paper solved the problem of boundary output feedback stabilization of state delayed reaction-diffusion PDEs with boundary measurement and using a finite-dimensional controller. We proved that the proposed control strategy achieves the exponential stabilization of the closed-loop system trajectories evaluated in $H^1$ norm for any arbitrarily given value of the state delay $h > 0$. A distinguishing feature is that the order of the controller is selected independently of the value of the state delay $h$.





\ifCLASSOPTIONcaptionsoff
  \newpage
\fi



\bibliographystyle{IEEEtranS}
\nocite{*}
\bibliography{IEEEabrv,mybibfile}

\begin{thebibliography}{10}
\providecommand{\url}[1]{#1}
\csname url@samestyle\endcsname
\providecommand{\newblock}{\relax}
\providecommand{\bibinfo}[2]{#2}
\providecommand{\BIBentrySTDinterwordspacing}{\spaceskip=0pt\relax}
\providecommand{\BIBentryALTinterwordstretchfactor}{4}
\providecommand{\BIBentryALTinterwordspacing}{\spaceskip=\fontdimen2\font plus
\BIBentryALTinterwordstretchfactor\fontdimen3\font minus
  \fontdimen4\font\relax}
\providecommand{\BIBforeignlanguage}[2]{{%
\expandafter\ifx\csname l@#1\endcsname\relax
\typeout{** WARNING: IEEEtranS.bst: No hyphenation pattern has been}%
\typeout{** loaded for the language `#1'. Using the pattern for}%
\typeout{** the default language instead.}%
\else
\language=\csname l@#1\endcsname
\fi
#2}}
\providecommand{\BIBdecl}{\relax}
\BIBdecl

\bibitem{allegretto2007stability}
W.~Allegretto and D.~Papini, ``Stability for delayed reaction--diffusion neural
  networks,'' \emph{Physics Letters A}, vol. 360, no.~6, pp. 669--680, 2007.

\bibitem{balas1988finite}
M.~J. Balas, ``Finite-dimensional controllers for linear distributed parameter
  systems: exponential stability using residual mode filters,'' \emph{Journal
  of Mathematical Analysis and Applications}, vol. 133, no.~2, pp. 283--296,
  1988.

\bibitem{coron2004global}
J.-M. Coron and E.~Tr{\'e}lat, ``Global steady-state controllability of
  one-dimensional semilinear heat equations,'' \emph{SIAM Journal on Control
  and Optimization}, vol.~43, no.~2, pp. 549--569, 2004.

\bibitem{coron2006global}
------, ``Global steady-state stabilization and controllability of {1D}
  semilinear wave equations,'' \emph{Communications in Contemporary
  Mathematics}, vol.~8, no.~04, pp. 535--567, 2006.

\bibitem{curtain1982finite}
R.~Curtain, ``Finite-dimensional compensator design for parabolic distributed
  systems with point sensors and boundary input,'' \emph{IEEE Transactions on
  Automatic Control}, vol.~27, no.~1, pp. 98--104, 1982.

\bibitem{curtain2012introduction}
R.~F. Curtain and H.~Zwart, \emph{An introduction to infinite-dimensional
  linear systems theory}.\hskip 1em plus 0.5em minus 0.4em\relax Springer
  Science \& Business Media, 2012, vol.~21.

\bibitem{gaffney2006gene}
E.~Gaffney and N.~Monk, ``Gene expression time delays and {Turing} pattern
  formation systems,'' \emph{Bulletin of mathematical biology}, vol.~68, no.~1,
  pp. 99--130, 2006.

\bibitem{grune2021finite}
L.~Gr{\"u}ne and T.~Meurer, ``Finite-dimensional output stabilization of linear
  diffusion-reaction systems--a small-gain approach,'' \emph{arXiv preprint
  arXiv:2104.06102}, 2021.

\bibitem{harkort2011finite}
C.~Harkort and J.~Deutscher, ``Finite-dimensional observer-based control of
  linear distributed parameter systems using cascaded output observers,''
  \emph{International journal of control}, vol.~84, no.~1, pp. 107--122, 2011.

\bibitem{hashimoto2016stabilization}
T.~Hashimoto and M.~Krstic, ``Stabilization of reaction diffusion equations
  with state delay using boundary control input,'' \emph{IEEE Transactions on
  Automatic Control}, vol.~61, no.~12, pp. 4041--4047, 2016.

\bibitem{kang2017boundary}
W.~Kang and E.~Fridman, ``Boundary control of delayed {ODE}-heat cascade under
  actuator saturation,'' \emph{Automatica}, vol.~83, pp. 252--261, 2017.

\bibitem{katz2020constructive}
R.~Katz and E.~Fridman, ``Constructive method for finite-dimensional
  observer-based control of {1-D} parabolic {PDEs},'' \emph{Automatica}, vol.
  122, p. 109285, 2020.

\bibitem{katz2021delayed}
------, ``Delayed finite-dimensional observer-based control of {1-D} parabolic
  {PDEs},'' \emph{Automatica}, vol. 123, p. 109364, 2021.

\bibitem{katz2021sub}
------, ``Sub-predictors and classical predictors for finite-dimensional
  observer-based control of parabolic {PDEs},'' \emph{arXiv preprint
  arXiv:2104.13294}, 2021.

\bibitem{krstic2009control}
M.~Krstic, ``Control of an unstable reaction-diffusion {PDE} with long input
  delay,'' \emph{Systems \& Control Letters}, vol.~58, no. 10-11, pp. 773--782,
  2009.

\bibitem{krstic2008boundary}
M.~Krstic and A.~Smyshlyaev, \emph{Boundary control of PDEs: A course on
  backstepping designs}.\hskip 1em plus 0.5em minus 0.4em\relax SIAM, 2008.

\bibitem{lhachemi2020finite}
\BIBentryALTinterwordspacing
H.~Lhachemi and C.~Prieur, ``Finite-dimensional observer-based boundary
  stabilization of reaction-diffusion equations with a either {D}irichlet or
  {N}eumann boundary measurement,'' \emph{submitted to Automatica}. [Online].
  Available:
  \url{http://www.gipsa-lab.grenoble-inp.fr/{\%7E}christophe.prieur/Papers/submitted.pdf}
\BIBentrySTDinterwordspacing

\bibitem{lhachemi2020feedback}
------, ``Feedback stabilization of a class of diagonal infinite-dimensional
  systems with delay boundary control,'' \emph{IEEE Transactions on Automatic
  Control}, vol.~66, no.~1, pp. 105--120, 2021.

\bibitem{lhachemi2021nonlinear}
------, ``Nonlinear boundary output feedback stabilization of reaction
  diffusion {PDEs},'' \emph{arXiv preprint arXiv:2105.08418}, 2021.

\bibitem{lhachemi2021predictor}
------, ``Predictor-based output feedback stabilization of an input delayed
  parabolic {PDE} with boundary measurement,'' \emph{arXiv preprint
  arXiv:2105.08431}, 2021.

\bibitem{lhachemi2019lmi}
H.~Lhachemi, C.~Prieur, and R.~Shorten, ``An {LMI} condition for the robustness
  of constant-delay linear predictor feedback with respect to uncertain
  time-varying input delays,'' \emph{Automatica}, vol. 109, p. 108551, 2019.

\bibitem{lhachemi2021robustness}
------, ``Robustness of constant-delay predictor feedback for in-domain
  stabilization of reaction--diffusion {PDEs} with time-and spatially-varying
  input delays,'' \emph{Automatica}, vol. 123, p. 109347, 2021.

\bibitem{lhachemi2019iss}
H.~Lhachemi and R.~Shorten, ``Iss property with respect to boundary
  disturbances for a class of {Riesz-spectral} boundary control systems,''
  \emph{Automatica}, vol. 109, p. 108504, 2019.

\bibitem{lhachemi2020boundary}
------, ``Boundary feedback stabilization of a reaction--diffusion equation
  with robin boundary conditions and state-delay,'' \emph{Automatica}, vol.
  116, p. 108931, 2020.

\bibitem{orlov2017general}
Y.~Orlov, ``On general properties of eigenvalues and eigenfunctions of a
  {Sturm--Liouville} operator: comments on ''{ISS} with respect to boundary
  disturbances for {1-D} parabolic {PDEs}'','' \emph{IEEE Transactions on
  Automatic Control}, vol.~62, no.~11, pp. 5970--5973, 2017.

\bibitem{pazy2012semigroups}
A.~Pazy, \emph{Semigroups of linear operators and applications to partial
  differential equations}.\hskip 1em plus 0.5em minus 0.4em\relax Springer
  Science \& Business Media, 2012, vol.~44.

\bibitem{polyanin2014nonlinear}
A.~D. Polyanin and A.~I. Zhurov, ``Nonlinear delay reaction--diffusion
  equations with varying transfer coefficients: {Exact} methods and new
  solutions,'' \emph{Applied Mathematics Letters}, vol.~37, pp. 43--48, 2014.

\bibitem{qi2020compensation}
J.~Qi and M.~Krstic, ``Compensation of spatially-varying input delay in
  distributed control of reaction-diffusion {PDEs},'' \emph{IEEE Transactions
  on Automatic Control}, 2020.

\bibitem{russell1978controllability}
D.~L. Russell, ``Controllability and stabilizability theory for linear partial
  differential equations: recent progress and open questions,'' \emph{{SIAM}
  Review}, vol.~20, no.~4, pp. 639--739, 1978.

\bibitem{su2009hopf}
Y.~Su, J.~Wei, and J.~Shi, ``Hopf bifurcations in a reaction--diffusion
  population model with delay effect,'' \emph{Journal of Differential
  Equations}, vol. 247, no.~4, pp. 1156--1184, 2009.

\bibitem{wu2020static}
H.-N. Wu and X.-W. Zhang, ``Static output feedback stabilization for a linear
  parabolic {PDE} system with time-varying delay via mobile collocated
  actuator/sensor pairs,'' \emph{Automatica}, vol. 117, p. 108993, 2020.

\bibitem{zhou1996robust}
K.~Zhou, J.~C. Doyle, K.~Glover \emph{et~al.}, \emph{Robust and optimal
  control}.\hskip 1em plus 0.5em minus 0.4em\relax Prentice hall New Jersey,
  1996, vol.~40.

\end{thebibliography}

\end{document}